\date{}
\newtheorem{theorem}{Theorem}[section]
\numberwithin{equation}{section}
\begin{document}
\setlength{\unitlength}{1cm}

%---------------------------------------------------------------

%---------------------------------------------------------------

\vskip1.0cm
% ---------------------TITLE----------------------------------

\centerline { \textbf{ Asymptotic properties  of boundary-value
problem
 }}
 \centerline { \textbf{  with transmission conditions }}

\vskip.2cm

% ---------------------NAME---------------------------------

\vskip.5cm \centerline {\textbf{ O. Sh. Mukhtarov$^\dag$ and K.
Aydemir$^\dag$  }}

\vskip.5cm
% Address 1 -------------------------------------------------------------------

\centerline {$^\dag$Department of Mathematics, Faculty of Science,}
\centerline {Gaziosmanpa\c{s}a University,
 60250 Tokat, Turkey}
\centerline {e-mail : {\tt omukhtarov@yahoo.com,
kadriye.aydemir@gop.edu.tr }}

% Address 2-----------------------------------------------------------------

\vskip.5cm \hskip-.5cm{\small{\bf Abstract :} In this study by
applying an own technique we investigate some asymptotic
approximation properties of new type discontinuous boundary-value
problems, which consists of a Sturm-Liouville equation together with
eigenparameter-dependent boundary and transmission conditions.
\vskip0.3cm\noindent {\bf Keywords :} \ Boundary-value problems,
eigenvalue, eigenfunction, asymptotic formulas, singular point.
{\vskip0.3cm\noindent {\bf AMS subject classifications :  34B24,
34L20 }
\section{\textbf{Introduction}}
Sturm-Liouvilly problems which contained spectral parameter in
boundary conditions form an important part of the spectral theory of
boundary value problems. This type problems has a lot of
applications in  mechanics and physics (see \cite{fu, li, tit} and
references, cited therein). Variety theoretic question of  such type
problems was intensively studied for quite a long time. In the
recent years, there has been increasing interest of this kind
problems which also may have discontinuities in the solution or its
derivative at interior points (see \cite{ba, ch, ka1, ka2, wang1}).
Such problems are connected with discontinuous material properties,
such as heat and mass transfer, vibrating string problems when the
string loaded additionally with points masses, diffraction problems
\cite{ li, tit} and varied assortment of physical transfer problems.
In this paper we shall investigated some asymptotic  approximation
properties of one discontinuous Sturm-Liouville problem for which
the eigenvalue parameter takes part in both differential equation
and boundary conditions and two supplementary transmission
conditions at one interior point are added to boundary conditions.
In particular, we find asymptotic approximation formulas for
eigenvalues and corresponding eigenfunctions. The problems with
transmission conditions  arise in mechanics, such as thermal
conduction problems for a thin laminated plate, which studied in
\cite{tik}. This class of problems essentially differs from the
classical case, and its investigation requires a specific approach
based on the method of separation of variables.  Note that,
eigenfunctions of our problem are discontinuous at the one inner
point of the considered interval, in general.
\section{Statement of the problem and construction\\ of  the
fundamental solutions} Let us consider the boundary value problem,
consisting of the differential equation
\begin{equation}\label{1.1}
T y:=-y^{\prime \prime }(x)+ q(x)y(x)=\lambda y(x)
\end{equation}
on  $[a, c)\cup(c,b]$, with eigenparameter- dependent boundary
conditions
\begin{equation}\label{1.2}
 \tau_{1}(y):=\alpha_{10}y(a)+\alpha_{11}y'(a)=0,
\end{equation}
\begin{equation}\label{1.3}
\tau_{2}(y):=\alpha_{20}y(b)-\alpha_{21}y'(b)+\lambda(\alpha'_{20}y(b)-\alpha'_{21}y'(b))=0
\end{equation}  and the transmission conditions
\begin{equation}\label{1.4}
\tau_{3}(y):=\beta^{-}_{11}y'(c-)+\beta^{-}_{10}y(c-)+\beta^{+}_{11}y'(c+)+\beta^{+}_{10}y(c+)=0,
\end{equation}
\begin{equation}\label{1.5}
\tau_{4}(y):=\beta^{-}_{21}y'(c-)+\beta^{-}_{20}y(c-)+\beta^{+}_{21}y'(c+)+\beta^{+}_{20}y(c+)=0,
\end{equation}
where the potential  $q(x)$ is real-valued function, which
continuous in each of the intervals $[a, c) \  \textrm{and}  (c, b]$
and has a finite limits $q( c\mp0)$, $\lambda$ \ is a complex
spectral parameter, \ $\alpha_{ij}, \ \beta^{\pm}_{ij}, \ (i=1,2 \
 \textrm{and} \ j=0,1), \ \alpha'_{ij} \ (i=2 \ \textrm{and} \ j=0,1)$ are real
numbers. This problem differs from the usual regular Sturm-Liouville
problem in the sense that the eigenvalue parameter $\lambda$ are
contained in both differential equation and boundary conditions and
two supplementary transmission conditions at one interior point are
added to boundary conditions.
Let $$A_{0}= \left[%
\begin{array}{cccc}
  \alpha_{21} & \alpha_{20}  \\
  \alpha'_{21} & \alpha'_{20}
\end{array} %
 \right] \  \textrm{and} \  A= \left[%
\begin{array}{cccc}
  \beta^{-}_{10} & \beta^{-}_{11} & \beta^{+}_{10} & \beta^{+}_{11} \\
  \beta^{-}_{20} & \beta^{-}_{21} & \beta^{+}_{20} & \beta^{+}_{21}
\end{array} %
 \right]. $$
 Denote  the determinant of the matrix $A_{0}$ by $\Delta_{0}$ and the determinant of the k-th
 and
j-th columns of the matrix A  by $\Delta_{kj}$. Note that throughout
this study we shall assume that $ \Delta_{0}>0, \ \Delta_{12}>0 \ \
\textrm{and} \ \Delta_{34}>0. $
 With a view to constructing the characteristic function
we shall define two basic solution $\varphi (x,\lambda )$ and $\psi
(x,\lambda )$ by the following procedure.

At first, let us consider the initial-value problem on the left part
$\left[ a,c\right)$ of the considered interval $[a, c)\cup(c,b]$
\begin{eqnarray}\label{eq}
-y^{\prime \prime }+q(x)y=\lambda y,\text{ \ \ }x\in \left[
a,c\right)\\
\label{tam1} y(a)=\alpha _{11},\text{ \ }y^{\prime
}(a)=-\alpha _{10}
\end{eqnarray}
By virtue of well-known existence and uniqueness theorem of ordinary
differential equation theory  this initial-value problem for each
$\lambda $ has a unique solution $\varphi _{1}(x,\lambda )$. Moreover [\cite{ti}, Teorem 7] this solution is an entire function of $%
\lambda $ for each fixed $x\in \left[ a,c\right).$ By using we shall
investigate the differential equation $(\ref{eq})$ on $(c,b]$
together with special type initial conditions
\begin{eqnarray}\label{tamm1}
y(c) =\frac{1}{\Delta_{12}}(\Delta_{23}\varphi _{1}(c,\lambda
)+\Delta_{24}\varphi^{\prime }_{1}(c,\lambda ))\\
\label{tamm2} y^{\prime }(c)
=\frac{-1}{\Delta_{12}}(\Delta_{13}\varphi _{1}(c,\lambda
)+\Delta_{14}\varphi^{\prime }_{1}(c,\lambda )).
\end{eqnarray}
Define a sequence of functions $y_{n}(x,\lambda ),n=0,1,2,...$ on
interval $\left( c,b\right] $ by the following  equations:
\begin{eqnarray*}
y_{0}(x,\lambda
)&=&\frac{1}{\Delta_{12}}[(\Delta_{23}+c\Delta_{13})\varphi
_{1}(c,\lambda )+(\Delta_{24}+c\Delta_{14})\varphi^{\prime
}_{1}(c,\lambda )\nonumber\\&+&(-\Delta_{13}\varphi _{1}(c,\lambda
)+\Delta_{14}\varphi^{\prime }_{1}(c,\lambda ))x]\nonumber\\
y_{n}(x,\lambda )&=&y_{0}(x,\lambda
)+\int\limits_{c}^{x}(x-z)(q(z)-\lambda )y_{n-1}(z,\lambda
)dz,\text{ \ }n=1,2,...
\end{eqnarray*}
It is easy to see that each of  $y_{n}(x,\lambda )$ is an entire
function of $\lambda $ for each $\left( c,b\right] $ Consider the
series
\begin{equation}
y_{0}(x,\lambda )+\sum\limits_{n=1}^{\infty }(y_{n}(x,\lambda
)-y_{n-1}(x,\lambda ))  \label{(3.11)}
\end{equation}
Denoting $ q_{1}=\max_{x\in (c,b]}|q(x)|\text{ \ and \ }Y(\lambda
)=\max_{x\in (c,b]}|y_{0}(x,\lambda )|, $ we get\\
 $
|y_{n}(x,\lambda )-y_{n-1}(x,\lambda )|\leq \frac{1}{(2n)!}Y(\lambda
)(q_{1}+|\lambda |^{n})(x-c)^{2n}$ for each $n=1,2,...$.\\ Because
of this inequality
 the series
$(\ref{(3.11)})$ is uniformly convergent \ with respect to the
variable $x$ on $\left( c,b\right] $, and with respect to the
variable $\lambda $ on every closed bar $|\lambda |\leq R.$ Let
$\varphi _{2}(x,\lambda )$ be the sum of the series
$(\ref{(3.11)}).$ Consequently $\varphi _{2}(x,\lambda )$ is an
entire function of $\lambda $ for each fixed $x\in ( c,b].$ Since
for $n\geq 2$
$$
y_{n}^{\prime }(x,\lambda )-y_{n-1}^{\prime }(x,\lambda
)=\int\limits_{0}^{x}(q(z)-\lambda )(y_{n-1}(z,\lambda
)-y_{n-2}(z,\lambda ))dz
$$
and
$$
y_{n}^{\prime \prime }(x,\lambda )-y_{n-1}^{\prime \prime
}(x,\lambda )=(q(x)-\lambda )(y_{n-1}(x,\lambda )-y_{n-2}(x,\lambda
))
$$
the first and second differentiated series also converge uniformly
with respect to $x.$ Taking into account the last equality we have
\begin{eqnarray*}
\varphi _{2}^{\prime \prime }(x,\lambda ) &=&y_{1}^{\prime \prime
}(x,\lambda )+\sum\limits_{n=2}^{\infty }(y_{n}^{\prime \prime
}(x,\lambda
)-y_{n-1}^{\prime \prime }(x,\lambda )) \\
&=&(q(x)-\lambda )y_{1}(x,\lambda ) \\
&&+\sum\limits_{n=2}^{\infty }(q(x)-\lambda
)(y_{n}(x,\lambda )-y_{n-1}(x,\lambda )) \\
&=&(q(x)-\lambda )\varphi _{2}(x,\lambda ),
\end{eqnarray*}
so $\varphi _{2}(x,\lambda )$ satisfies the equation $(\ref{eq}).$
Moreover, since each $y_{n}(x,\lambda )$ satisfies the initial
conditions $(\ref{tamm1} )$ and $(\ref{tamm2})$, then the function
$\varphi _{2}(x,\lambda )$ satisfies the initial conditions
$(\ref{tamm1} )$ and $(\ref{tamm2})$. Consequently, the function
$\varphi (x,\lambda )$ defined by
\begin{equation}
\varphi (x,\lambda )=\{
\begin{array}{c}
\varphi _{1}(x,\lambda )\text{ \ for }x\in \lbrack a,c) \\
\varphi _{2}(x,\lambda )\text{ \ for }x\in (c,b].%
\end{array}
\label{(3.16)}
\end{equation}
 satisfies equation $(\ref{1.1})$,  the first boundary condition
$(\ref{1.2})$ and the both transmission conditions $(\ref{1.4})$ and $(%
\ref{1.5})$. By applying the same technique we can prove that  for
any $\lambda \in C$ the differential equation $(\ref{1.1})$ has such
 solution\begin{equation}
\psi (x,\lambda )=\{
\begin{array}{c}
\psi _{1}(x,\lambda )\text{ \ for }x\in \lbrack a,c) \\
\psi _{2}(x,\lambda )\text{ \ for }x\in (c,b].%
\end{array}
\end{equation}
which satisfies the initial condition $(\ref%
{1.3}),$ the both transmission conditions $(\ref{1.4})-(\ref{1.5})$
for each $x\in \lbrack a,c)\cup(c,b]$ and  is an entire function of
$\lambda $ for each fixed $x\in \lbrack a,c)\cup(c,b]$. Below, for
shorting  we shall use also  notations; $\varphi _{i}(x,\lambda
):=\varphi _{i\lambda}, \ \psi _{i}(x,\lambda ):=\psi _{i\lambda}.$
\section{Some asymptotic approximation formulas
for fundamental solutions} Let $\lambda =s^{2}$. By applying the
method of variation of parameters we can prove that the next
integral and
integro-differential equations are hold for $k=0$ and $k=1.$%
\begin{eqnarray}\label{(4.2)}
\frac{d^{k}}{dx^{k}}\varphi _{1\lambda}(x ) &=&\alpha _{11}\frac{d^{k}}{dx^{k}}%
\cos \left[ s\left( x-a\right) \right]
-\frac{a_{10}}{s}\frac{d^{k}}{dx^{k}}\sin \left[ s\left( x-a\right)
\right]
\notag \\
&&+\frac{1}{s}\int\limits_{a}^{x}\frac{d^{k}}{dx^{k}}\sin \left[ %
s\left( x-z\right) \right] q(z)\varphi _{1}(z,\lambda )dz \\
\label{(4.22)} \frac{d^{k}}{dx^{k}}\psi _{1\lambda}(x )
&=&-\frac{1}{\Delta_{34}}(\Delta_{14}\psi _{2}(c,\lambda
)+\Delta_{24} \psi^{\prime }_{2}(c,\lambda ))
\frac{d^{k}}{dx^{k}}\cos \left[
s(x-c)\right] \nonumber \\
&&+\frac{1}{s \Delta_{34}}(\Delta_{13}\psi _{2}(c,\lambda
)+\Delta_{23}\psi^{\prime }_{2}(c,\lambda ))\frac{d^{k}}{dx^{k}}\sin
\left[ s(x-c)
\right]  \notag \\
&&+\frac{1}{s}\int\limits_{x}^{c}\frac{d^{k}}{dx^{k}}\sin \left[
s\left( x-z\right) \right] q(z)\psi _{1}(z,\lambda )dz
\label{(4.22)}
\end{eqnarray}
for $x \in [a,c)$ and
\begin{eqnarray}\label{(4.a)}
\frac{d^{k}}{dx^{k}}\varphi_{2\lambda}(x )
&=&\frac{1}{\Delta_{12}}(\Delta_{23}\varphi _{1}(c,\lambda
)+\Delta_{24}\varphi^{\prime }_{1}(c,\lambda ))
\frac{d^{k}}{dx^{k}}\cos \left[
s(x-c)\right]  \notag \\
&&-\frac{1}{s \Delta_{12}}(\Delta_{13}\varphi _{1}(c,\lambda
)+\Delta_{14}\varphi^{\prime }_{1}(c,\lambda
))\frac{d^{k}}{dx^{k}}\sin \left[ s(x-c)
\right]  \notag \\
&&+\frac{1}{s}\int\limits_{c}^{x}\frac{d^{k}}{dx^{k}}\sin \left[
s\left( x-z\right) \right] q(z)\varphi _{2}(z,\lambda )dz
\\ \label{(4.21)}
\frac{d^{k}}{dx^{k}}\psi _{2\lambda}(x )&=&(\alpha _{21}+\lambda \alpha' _{21})\frac{d^{k}}{dx^{k}}%
\cos \left[ s\left( x-b\right) \right] +\frac{1}{s}(\alpha
_{20}+\lambda \alpha' _{20})\frac{d^{k}}{dx^{k}}\sin \left[ s\left(
x-b\right) \right]
\notag \\
&&+\frac{1}{s}\int\limits_{x}^{b}\frac{d^{k}}{dx^{k}}\sin \left[ %
s\left( x-z\right) \right] q(z)\psi _{1}(z,\lambda )dz
\end{eqnarray}
for $x \in (c,b]$. Now we are ready to prove the following theorems.
\begin{theorem} \label{(4.n)}
Let $\lambda =s^{2}$, $Ims=t.$ Then \ if $\alpha _{11}\neq 0$
\begin{eqnarray}
\frac{d^{k}}{dx^{k}}\varphi _{1\lambda}(x ) &=&\alpha _{11}\frac{d^{k}}{dx^{k}}%
\cos \left[ s\left( x-a\right) \right] +O\left( \left| s\right|
^{k-1}e^{_{\left| t\right| (x-a)}}\right)
\label{(4.3)} \\
\frac{d^{k}}{dx^{k}}\varphi _{2\lambda}(x ) &=&-\frac{\Delta_{24}}{%
\Delta_{12}}\alpha _{11}s\sin \left[ s\left( c-a\right) \right]
\frac{d^{k}}{dx^{k}}\cos \left[ s\left( x-c\right) \right] \notag
\\
&&+O\left(|s| ^{k} e^{\left| t\right|(x-a)}\right) \label{(4.4)}
\end{eqnarray}
as $\left| \lambda \right| \rightarrow \infty $, while if $\alpha _{11}=0$%
\begin{eqnarray}
\frac{d^{k}}{dx^{k}}\varphi _{1\lambda}(x ) &=&-\frac{\alpha_{10}}{s}%
\frac{d^{k}}{dx^{k}}\sin \left[ s(x-a)\right] +O\left( \left|
s\right| ^{k-2}e^{\left| t\right| (x-a)}\right) \label{(4.5)}
\\
\frac{d^{k}}{dx^{k}}\varphi _{2\lambda}(x ) &=&-\frac{\Delta_{24}}{%
\Delta_{12}}\alpha _{10}\cos  \left[ s\left( c-a\right) \right] \frac{d^{k}}{dx^{k}} \cos \left[ s(x-c)\right]  \notag \\
&&+O\left( \left| s\right| ^{k-1}e^{\left| t\right| (x-a)}{%
}\right)  \label{(4.6)}
\end{eqnarray}
as $\left| \lambda \right| \rightarrow \infty $ ($k=0,1)$. Each of
this asymptotic equalities hold uniformly for $x.$
\end{theorem}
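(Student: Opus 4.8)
The plan is to read off everything from the two Volterra-type integral representations already established, namely \eqref{(4.2)} for $\varphi_{1\lambda}$ on $[a,c)$ and \eqref{(4.a)} for $\varphi_{2\lambda}$ on $(c,b]$, using the classical successive-approximation/Gronwall machinery. The only external facts needed are the elementary bounds $\left|\cos[s(x-a)]\right|\le e^{\left|t\right|(x-a)}$ and $\left|\sin[s(x-a)]\right|\le e^{\left|t\right|(x-a)}$ for $t=\Ima s$, together with $q$ being bounded on each subinterval. So the first step is to establish an \emph{a priori} exponential bound. Writing $F(x)=e^{-\left|t\right|(x-a)}\left|\varphi_{1\lambda}(x)\right|$ and taking absolute values in \eqref{(4.2)} with $k=0$, the trigonometric bounds turn the identity into $F(x)\le \left(\left|\alpha_{11}\right|+\left|\alpha_{10}\right|/\left|s\right|\right)+\tfrac{1}{\left|s\right|}\int_a^x\left|q(z)\right|F(z)\,dz$; Gronwall's inequality then gives $\left|\varphi_{1\lambda}(x)\right|\le C\,e^{\left|t\right|(x-a)}$ for all large $\left|s\right|$, uniformly in $x$. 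When $\alpha_{11}=0$ the inhomogeneous term is $O(\left|s\right|^{-1})$, so the same argument yields the sharper bound $\left|\varphi_{1\lambda}(x)\right|\le C\left|s\right|^{-1}e^{\left|t\right|(x-a)}$, which is what drives the different leading order in \eqref{(4.5)}--\eqref{(4.6)}.

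Next I would bootstrap by substituting the a priori bound back into \eqref{(4.2)}. For $\alpha_{11}\ne0$ the integral term is dominated by $\tfrac{1}{\left|s\right|}\int_a^x e^{\left|t\right|(x-z)}\left|q(z)\right|\,C e^{\left|t\right|(z-a)}\,dz=O(\left|s\right|^{-1}e^{\left|t\right|(x-a)})$, and the $\tfrac{\alpha_{10}}{s}\sin$-term is of the same order, so the $\cos$-term survives as the leading behaviour; this is exactly \eqref{(4.3)} for $k=0$. The case $k=1$ follows by differentiating \eqref{(4.2)}: here one must observe that the boundary contribution from the upper limit vanishes because $\sin[s(x-x)]=0$, so $\tfrac{d}{dx}$ of the integral is $\int_a^x\cos[s(x-z)]q(z)\varphi_1\,dz=O(e^{\left|t\right|(x-a)})=O(\left|s\right|^{k-1}e^{\left|t\right|(x-a)})$, matching the stated error. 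The $\alpha_{11}=0$ formulas \eqref{(4.5)}--\eqref{(4.6)} are obtained identically, using the sharper bound, which pushes each error term down by one power of $\left|s\right|$.

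The crux is then propagating these estimates through the interior point to get $\varphi_{2\lambda}$. I would evaluate the $k=0$ and $k=1$ asymptotics of $\varphi_{1\lambda}$ at $x=c$, giving $\varphi_1(c,\lambda)=\alpha_{11}\cos[s(c-a)]+O(\left|s\right|^{-1}e^{\left|t\right|(c-a)})$ and $\varphi_1'(c,\lambda)=-\alpha_{11}s\sin[s(c-a)]+O(e^{\left|t\right|(c-a)})$, and insert these into the coefficients of \eqref{(4.a)}. The key observation is that $\varphi_1'(c,\lambda)$ is one power of $\left|s\right|$ larger than $\varphi_1(c,\lambda)$, so in the coefficient $\tfrac{1}{\Delta_{12}}(\Delta_{23}\varphi_1(c,\lambda)+\Delta_{24}\varphi_1'(c,\lambda))$ of $\cos[s(x-c)]$ the $\Delta_{24}\varphi_1'(c,\lambda)$ term dominates and produces precisely $-\tfrac{\Delta_{24}}{\Delta_{12}}\alpha_{11}s\sin[s(c-a)]$; meanwhile the $\sin[s(x-c)]$-coefficient, carrying an extra $1/s$, and the integral term (bounded via $\left|\varphi_{2\lambda}(x)\right|\le Ce^{\left|t\right|(x-a)}$, obtained by the same Gronwall estimate on $(c,b]$) are both of lower order and fall into the $O(\left|s\right|^{k}e^{\left|t\right|(x-a)})$ remainder. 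Multiplying the exponentials as $e^{\left|t\right|(c-a)}e^{\left|t\right|(x-c)}=e^{\left|t\right|(x-a)}$ keeps all bounds uniform, yielding \eqref{(4.4)}; the case $\alpha_{11}=0$ is the same bookkeeping with $\varphi_1'(c,\lambda)\sim-\alpha_{10}\cos[s(c-a)]$ now dominating, giving \eqref{(4.6)}.

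The main obstacle is not any single estimate but the disciplined tracking of powers of $\left|s\right|$ through the two-stage transfer, especially the fact that differentiation costs one power of $\left|s\right|$ while the coefficient in \eqref{(4.a)} already contains the higher-order quantity $\varphi_1'(c,\lambda)$; getting the exponents in the remainder terms to line up with $O(\left|s\right|^{k-1})$, $O(\left|s\right|^{k})$, $O(\left|s\right|^{k-2})$ depending on the case and the equation requires keeping the two cases $\alpha_{11}\ne0$ and $\alpha_{11}=0$ strictly separate, since a single vanishing of the leading coefficient cascades through both the a priori bound and the interior transfer. Uniformity in $x$ is automatic throughout because every bound is expressed through $e^{\left|t\right|(x-a)}$ with $x$-independent constants.
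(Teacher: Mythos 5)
Your proposal is correct and is essentially the paper's own argument: the paper disposes of $\varphi_{1\lambda}$ by citing Titchmarsh's lemma (whose standard proof is precisely your Gronwall-plus-bootstrap applied to \eqref{(4.2)}), and for $\varphi_{2\lambda}$ it substitutes \eqref{(4.3)} into \eqref{(4.a)}, derives a sup-norm a priori bound by the same small-kernel Volterra estimate you use, and then substitutes back to isolate the leading term --- exactly your two-stage transfer through $x=c$. One detail should be corrected: when $\alpha_{11}\neq 0$ your claimed a priori bound $|\varphi_{2\lambda}(x)|\le C\,e^{|t|(x-a)}$ is too strong, because the inhomogeneous part of the Volterra equation on $(c,b]$ contains $\Delta_{24}\varphi_{1}'(c,\lambda)=O\left(|s|\,e^{|t|(c-a)}\right)$, so Gronwall only yields $|\varphi_{2\lambda}(x)|\le C\,|s|\,e^{|t|(x-a)}$ (indeed the leading term of \eqref{(4.4)} itself has size $|s|$, so the stronger bound cannot hold). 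This weaker bound still suffices: it makes the integral term $\tfrac{1}{s}\int_{c}^{x}\sin\left[s(x-z)\right]q(z)\varphi_{2}(z,\lambda)\,dz$ of order $O\left(e^{|t|(x-a)}\right)$ for $k=0$ and $O\left(|s|\,e^{|t|(x-a)}\right)$ after differentiation for $k=1$, which is exactly the stated remainder $O\left(|s|^{k}e^{|t|(x-a)}\right)$; it is worth noting that the paper makes the same overstatement when it asserts $Y(\lambda)=O(1)$, where its own inequality delivers only $Y(\lambda)=O(|s|)$, and its conclusion survives for the same reason yours does.
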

\begin{proof}
The asymptotic formulas for $(\ref{(4.3)})$ in $(\ref{(4.4)})$
follows immediately from the Titchmarsh's Lemma on the asymptotic
behavior of $\varphi
_{\lambda }(x)$ (\cite{ti}, Lemma 1.7). But the corresponding formulas for $%
\varphi _{2\lambda}(x )$ need individual consideration. Let $\alpha
_{11}\neq 0$. Substituting $(\ref{(4.3)})$ in $(\ref{(4.a)})$ we
have the next ''asymptotic integral equation''
\begin{eqnarray}\label{(4.t)}
 \varphi _{2\lambda}(x ) &=& \frac{1}{\Delta_{12}}\alpha _{11}
\big[\Delta_{23}\cos s(c-a)\cos
s(x-c)-\Delta_{24}s \sin s(c-a)%
\cos s(x-c) \notag \\&&-\frac{\Delta_{13}}{s}\cos s (c-a) \sin
s(x-c)+\frac{\Delta_{14}}{s}\sin s(c-a) \sin s(x-c)
\big] \notag \\
&&+\frac{1}{s}\int\limits_{c}^{x}\sin \left[ s(x-z)\right] q(z)\phi
_{2}(z,\lambda )dz +O(e^{\left| t\right| (x-a)})
\end{eqnarray}
Multiplying by $e^{-\left| t\right| (x-a) }$ and denoting \ \ $
Y(x,\lambda )=e^{-\left| t\right| (x-a)} \varphi _{2\lambda}(x ) $
we get
\begin{eqnarray}
Y(x,\lambda ) &=&\frac{1}{\Delta_{12}}\alpha _{11}e^{-\left|
t\right| (x-a)} \big[\Delta_{23}\cos s(c-a)\cos
s(x-c)-\Delta_{24}s \sin s(c-a)%
\cos s(x-c) \notag \\&&-\frac{\Delta_{13}}{s}\cos s (c-a) \sin
s(x-c)+\frac{\Delta_{14}}{s}\sin s(c-a) \sin s(x-c)
\big] \notag \\
&&+\frac{1}{s}\int\limits_{c}^{x}\sin \left[ s(x-z)\right] q(z)
e^{-\left| t\right| (x-z)} Y(z,\lambda )dz +O(1)  \label{(4.7)}
\end{eqnarray}
Denoting \ \ $
Y(\lambda )=\underset{x\in \lbrack c,b]}{\max }|Y(x,\lambda )|\text{ and }%
\widetilde{q}=\int\limits_{c}^{b}|q(z)|dz $ from the last equation
we have $Y(\lambda )=O(1)  \ \textrm{as} \left| \lambda \right|
\rightarrow \infty ,$ so $ \varphi _{2\lambda}(x )=O(e^{\left|
t\right| (x-a)}) $. Substituting in $(\ref{(4.t)})$  we obtain
$(\ref{(4.4)})$ for the case $k=0$. The case $k=1$ of the
$(\ref{(4.4)})$ follows at once on differentiating $(\ref{(4.a)})$
and making the same procedure as in the case $k=0$. The proof of
$(\ref{(4.5)})$ in $(\ref{(4.6)})$ is similar.
\end{proof}
Similarly  we can easily obtain the following Theorem for
$\psi_{i}(x,\lambda) (i=1,2).$
\begin{theorem} \label{(c1)}
Let $\lambda =s^{2}$, $Ims=t.$ Then \ if $\alpha' _{21}\neq 0$
\begin{eqnarray}
\frac{d^{k}}{dx^{k}}\psi _{2\lambda}(x ) &=&\alpha' _{21}s ^{2}\frac{d^{k}}{dx^{k}}%
\cos \left[ s\left( b-x\right) \right] +O\left( \left| s\right|
^{k+1}e^{_{\left| t\right| (b-x)}}\right)
\label{(c2)} \\
\frac{d^{k}}{dx^{k}}\psi _{1\lambda}(x ) &=&-\frac{\Delta_{24}}{%
\Delta_{34}}\alpha' _{21}s ^{3}\sin  \left[ s\left( b-c\right)
\right] \frac{d^{k}}{dx^{k}}\cos \left[ s\left( x-c\right) \right]
\notag
\\
&&+O\left(|s| ^{k+2} e^{\left| t\right|(b-x)}\right) \label{(4.n1)}
\end{eqnarray}
as $\left| \lambda \right| \rightarrow \infty $, while if $\alpha'_{21}=0$%
\begin{eqnarray}
\frac{d^{k}}{dx^{k}}\psi _{2\lambda}(x ) &=&-a'_{20}s%
\frac{d^{k}}{dx^{k}}\sin \left[ s(b-x)\right] +O\left( \left|
s\right| ^{k}e^{\left| t\right| (b-x)}\right) \label{(c3)}
\\
\frac{d^{k}}{dx^{k}}\psi _{1\lambda}(x) &=&-\frac{\Delta_{24}}{%
\Delta_{34}}\alpha' _{20}s^{2}\cos  \left[ s\left( b-c\right) \right] \frac{d^{k}}{dx^{k}} \cos \left[ s(x-c)\right]  \notag \\
&&+O\left( \left| s\right| ^{k+1}e^{\left| t\right| (b-x)}{%
}\right)  \label{(c4)}
\end{eqnarray}
as $\left| \lambda \right| \rightarrow \infty $ ($k=0,1)$. Each of
this asymptotic equalities hold uniformly for $x.$
\end{theorem}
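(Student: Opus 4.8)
The argument runs parallel to the proof of Theorem \ref{(4.n)}, with the roles of the two endpoints interchanged: here the solution is anchored at the right endpoint $b$ and carried leftward through the transmission conditions, so the natural weight is $e^{-|t|(b-x)}$ in place of $e^{-|t|(x-a)}$, and the integral operators run first over $(c,b]$ and then over $[a,c)$.

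First I would treat $\psi_{2\lambda}(x)$ on $(c,b]$ directly from the integro-differential equation $(\ref{(4.21)})$. Since $\lambda=s^{2}$, its free terms are $(\alpha_{21}+s^{2}\alpha'_{21})\tfrac{d^{k}}{dx^{k}}\cos[s(x-b)]$ and $\tfrac{1}{s}(\alpha_{20}+s^{2}\alpha'_{20})\tfrac{d^{k}}{dx^{k}}\sin[s(x-b)]$, so the leading power of $s$ depends on whether $\alpha'_{21}$ vanishes. When $\alpha'_{21}\neq0$ the first term dominates and, by the evenness of cosine, supplies the leading term $\alpha'_{21}s^{2}\tfrac{d^{k}}{dx^{k}}\cos[s(b-x)]$; when $\alpha'_{21}=0$ the surviving top-order piece is $\tfrac{1}{s}s^{2}\alpha'_{20}\tfrac{d^{k}}{dx^{k}}\sin[s(x-b)]=-\alpha'_{20}s\,\tfrac{d^{k}}{dx^{k}}\sin[s(b-x)]$. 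To absorb the integral, I would put $Z(x,\lambda)=e^{-|t|(b-x)}\psi_{2\lambda}(x)$, multiply $(\ref{(4.21)})$ by $e^{-|t|(b-x)}$, and estimate the kernel through $|\sin[s(x-z)]|\le e^{|t|(z-x)}$ for $z\ge x$, so that the integral operator has norm at most $\tfrac{1}{|s|}\int_{c}^{b}|q(z)|\,dz$. A maximum estimate of Gronwall type then gives $\max_{x\in(c,b]}|Z(x,\lambda)|=O(|s|^{2})$ in the first case and $O(|s|)$ in the second; feeding this bound back into the integral of $(\ref{(4.21)})$ lowers its order by one power of $s$, leaving precisely the leading terms of $(\ref{(c2)})$ and $(\ref{(c3)})$ for $k=0$, and differentiation of $(\ref{(4.21)})$ reproduces the case $k=1$.

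Next I would obtain $\psi_{1\lambda}(x)$ on $[a,c)$ from $(\ref{(4.22)})$ by inserting the values of $\psi_{2}(c,\lambda)$ and $\psi'_{2}(c,\lambda)$ just found. Evaluating $(\ref{(c2)})$ at $x=c$ for $k=0,1$ gives $\psi_{2}(c,\lambda)=\alpha'_{21}s^{2}\cos[s(b-c)]+O(|s|)$ and $\psi'_{2}(c,\lambda)=\alpha'_{21}s^{3}\sin[s(b-c)]+O(|s|^{2})$, so in the bracket $\Delta_{14}\psi_{2}(c,\lambda)+\Delta_{24}\psi'_{2}(c,\lambda)$ the $\Delta_{24}\psi'_{2}$ contribution of order $s^{3}$ dominates and yields the leading term $-\tfrac{\Delta_{24}}{\Delta_{34}}\alpha'_{21}s^{3}\sin[s(b-c)]\cos[s(x-c)]$ of $(\ref{(4.n1)})$; the identical substitution of $(\ref{(c3)})$ produces $(\ref{(c4)})$. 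As above I would convert $(\ref{(4.22)})$ into an asymptotic integral equation, normalize by $e^{-|t|(b-x)}$, bound $\psi_{1\lambda}$ by its source order via the same maximum estimate, and then reinsert that bound into the integral so that the $\tfrac{1}{s}$ factor pushes it into the error; the exponential bounds for $\cos[s(x-c)]$ and $\sin[s(x-c)]$ on $[a,c)$ supply the uniform factor $e^{|t|(b-x)}$.

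The main obstacle is the careful bookkeeping of the $s$-orders. Because $\lambda=s^{2}$ enters the boundary data at $b$, the free terms here carry two extra powers of $s$ relative to the $\varphi$-case, and the matching step introduces a further power through $\psi'_{2}(c,\lambda)$; one must verify that the $\tfrac{1}{|s|}$-contractivity of the integral operator still dominates these larger sources for $|s|$ large, and that exactly one power of $s$ is lost at each differentiation and at the passage through $x=c$. Granting that the contraction estimate holds uniformly on every disc $|\lambda|\le R$ and in the limit $|\lambda|\to\infty$, the stated formulas, uniform in $x$, follow exactly as in Theorem \ref{(4.n)}.
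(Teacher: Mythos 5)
Your proposal is correct and follows essentially the approach the paper intends: the paper gives no explicit proof of Theorem \ref{(c1)}, stating only that it follows ``similarly'' to Theorem \ref{(4.n)}, and your argument is precisely that adaptation --- treat $\psi_{2\lambda}$ first from $(\ref{(4.21)})$ via the weighted function $e^{-|t|(b-x)}\psi_{2\lambda}(x)$ and a maximum estimate, then feed $\psi_{2}(c,\lambda)$, $\psi_{2}'(c,\lambda)$ into $(\ref{(4.22)})$ to get $\psi_{1\lambda}$, with the endpoint roles of $a$ and $b$ interchanged. Your order bookkeeping (leading terms $s^{2}$, $s^{3}$ when $\alpha'_{21}\neq 0$, one power lower when $\alpha'_{21}=0$) matches the stated formulas, so nothing is missing.
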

\section{Asymptotic behaviour
 of eigenvalues and\\ corresponding eigenfunctions%
} It is well-known from ordinary differential equation theory that
the Wronskians $W[\varphi _{1\lambda},\psi _{1\lambda}]_{x}$ and
$W[\varphi _{2\lambda}, \psi _{2\lambda}]_{x}$ are independent of
variable $x.$ Denoting $w_{i}(\lambda )=W[\varphi _{i\lambda},\psi
_{i\lambda}]_{x}$ we have
\begin{eqnarray*}
w_{2}(\lambda ) &=&\varphi_{2}(c,\lambda )\psi _{2}^{\prime
}(c,\lambda )-\varphi
_{2}^{\prime }(,\lambda )\psi _{2}(c,\lambda ) \\
 &=&\frac{\Delta_{34}}{\Delta_{12}}(\varphi _{1}(c,\lambda )\psi
_{1}^{\prime }(c,\lambda )-\varphi
_{1}^{\prime }(c,\lambda )\psi _{1}(c,\lambda )) \\
&=&\frac{\Delta_{34}}{\Delta_{12}} w_{1}(\lambda )
\end{eqnarray*}
Denote $ \omega(\lambda):= \Delta_{34} \omega_{1}(\lambda) =
\Delta_{12} \ \omega_{2}(\lambda). $

By the same technique as in \cite{ka1} we can prove the following
theorem
\begin{theorem}
The eigenvalues of the problem $(\ref{1.1})$-$(\ref{1.5})$ are
consist of the zeros of the function $w(\lambda ).$\label{t2}
\end{theorem}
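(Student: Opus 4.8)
The plan is to show that $\lambda_0$ is an eigenvalue if and only if $w(\lambda_0)=0$, by establishing both implications. The two basic solutions $\varphi(x,\lambda)$ and $\psi(x,\lambda)$ constructed in Section~2 already satisfy different parts of the problem: by construction $\varphi$ satisfies the equation $(\ref{1.1})$, the left boundary condition $(\ref{1.2})$, and both transmission conditions $(\ref{1.4})$--$(\ref{1.5})$, while $\psi$ satisfies the equation, the right boundary condition $(\ref{1.3})$, and again both transmission conditions. The Wronskian $w(\lambda)$ measures precisely whether these two solutions are linearly dependent, so the whole argument reduces to linking linear dependence of $\varphi$ and $\psi$ with the existence of a nontrivial eigenfunction.

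First I would prove the forward direction. Suppose $w(\lambda_0)=0$. Since $w_1(\lambda_0)=W[\varphi_{1\lambda_0},\psi_{1\lambda_0}]$ vanishes and both $\varphi_{1\lambda_0}$ and $\psi_{1\lambda_0}$ solve the same second-order equation on $[a,c)$, they must be linearly dependent there, say $\psi_{1\lambda_0}=k\,\varphi_{1\lambda_0}$ for some constant $k\neq 0$ (neither solution is trivial, by the initial conditions defining them). Because of the relation $w_2=\tfrac{\Delta_{34}}{\Delta_{12}}w_1$ and the way $\varphi_2,\psi_2$ are propagated across $c$ through the transmission conditions, the same proportionality $\psi_{2\lambda_0}=k\,\varphi_{2\lambda_0}$ holds on $(c,b]$. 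Then $\varphi(x,\lambda_0)$ inherits the right boundary condition $(\ref{1.3})$ from $\psi$, and since it already satisfies $(\ref{1.1})$, $(\ref{1.2})$, $(\ref{1.4})$, $(\ref{1.5})$, it is a nontrivial eigenfunction; hence $\lambda_0$ is an eigenvalue.

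For the converse, suppose $\lambda_0$ is an eigenvalue with eigenfunction $u$. On $[a,c)$, $u$ solves the equation and satisfies $(\ref{1.2})$, so by uniqueness for the initial-value problem $(\ref{eq})$--$(\ref{tam1})$ that pins down $\varphi_1$, $u$ must be a scalar multiple of $\varphi_{1\lambda_0}$. The transmission conditions $(\ref{1.4})$--$(\ref{1.5})$, which have nonzero determinants $\Delta_{12},\Delta_{34}$, then transport this proportionality to $(c,b]$, so $u=c_1\varphi(x,\lambda_0)$ on the whole interval. On the other hand, $u$ also satisfies the right condition $(\ref{1.3})$, so the same reasoning gives $u=c_2\psi(x,\lambda_0)$. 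Therefore $\varphi(\cdot,\lambda_0)$ and $\psi(\cdot,\lambda_0)$ are linearly dependent, forcing their Wronskian, and hence $w(\lambda_0)$, to vanish.

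The main obstacle will be the bookkeeping across the singular point $c$: one must verify carefully that linear dependence on $[a,c)$ genuinely propagates to linear dependence on $(c,b]$ through the transmission conditions, and that the constant $k$ is the same on both sides. This is exactly where the assumptions $\Delta_{0}>0$, $\Delta_{12}>0$, $\Delta_{34}>0$ (which guarantee the transmission map across $c$ is invertible) together with the identity $w_2=\tfrac{\Delta_{34}}{\Delta_{12}}w_1$ are essential, and it is the step that makes this problem differ from the classical regular case. The remaining details are the standard uniqueness-of-solution arguments, which the theorem statement itself attributes to the technique of \cite{ka1}.
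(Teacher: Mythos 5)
Your argument is correct, and it is essentially the approach the paper itself points to: the paper gives no written proof of Theorem \ref{t2} at all, merely asserting it can be proved ``by the same technique as in \cite{ka1}'', and your Wronskian/linear-dependence argument (with the transmission map across $c$ invertible because $\Delta_{12}\neq 0$ and $\Delta_{34}\neq 0$, plus uniqueness for the initial-value problems at $a$, $b$ and $c$) is exactly that standard technique, so your sketch in fact fills the gap the paper leaves open. The one parenthetical worth tightening is the claim that $\psi_{1\lambda_0}$ is nontrivial ``by the initial conditions defining them'': nontriviality of $\psi_{2\lambda_0}$ requires $\Delta_{0}>0$ (so that $\alpha_{21}+\lambda\alpha'_{21}$ and $\alpha_{20}+\lambda\alpha'_{20}$ cannot vanish simultaneously), and nontriviality of $\psi_{1\lambda_0}$ then follows from the invertibility of the transmission map, not from the initial data alone.
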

Now by modifying the standard method we  prove that all eigenvalues
of the problem $(\ref{1.1})-(\ref{1.5})$ are real.
\begin{theorem}
All eigenvalues of the problem $(\ref{1.1})-(\ref{1.5})$ are real.
\end{theorem}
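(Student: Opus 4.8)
The plan is to take an arbitrary eigenvalue $\lambda_0$ with eigenfunction $u$ and show $\lambda_0=\overline{\lambda_0}$ by means of a \emph{weighted} Lagrange identity, the weights being dictated precisely by the three determinant hypotheses $\Delta_0,\Delta_{12},\Delta_{34}>0$. Since $q$ is real, the pointwise identity $Tu\,\bar v-u\,\overline{Tv}=\frac{d}{dx}\big(u\bar v'-u'\bar v\big)$ holds; I would set $v=u$ and integrate over the two subintervals $[a,c)$ and $(c,b]$ separately (the separation is forced by the discontinuity at $c$). Weighting the left piece by $1/\Delta_{12}$ and the right piece by $1/\Delta_{34}$ and using $Tu=\lambda_0 u$ gives
$$(\lambda_0-\overline{\lambda_0})\Big[\tfrac{1}{\Delta_{12}}\!\int_a^c|u|^2\,dx+\tfrac{1}{\Delta_{34}}\!\int_c^b|u|^2\,dx\Big]=\tfrac{1}{\Delta_{12}}\big[u\bar u'-u'\bar u\big]_a^{c-}+\tfrac{1}{\Delta_{34}}\big[u\bar u'-u'\bar u\big]_{c+}^{b}.$$

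The next step is to dispose of the three contributions on the right. At $x=a$ the coefficients $\alpha_{10},\alpha_{11}$ are real, so $u$ and $\bar u$ both satisfy $\tau_1=0$; the resulting homogeneous $2\times2$ system forces $u(a)\bar u'(a)-u'(a)\bar u(a)$ to vanish. At the interior point $c$ is where the chosen weights pay off: because $u$ obeys $\tau_3=\tau_4=0$, the vector $(u(c+),u'(c+))$ is obtained from $(u(c-),u'(c-))$ by exactly the real linear map that produced the relation $w_2=\frac{\Delta_{34}}{\Delta_{12}}w_1$ recorded just before the theorem, so the same scaling $\big[u\bar u'-u'\bar u\big]_{c+}=\frac{\Delta_{34}}{\Delta_{12}}\big[u\bar u'-u'\bar u\big]_{c-}$ holds for the pair $(u,\bar u)$ and the two weighted $c$-brackets cancel identically. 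The only genuinely $\lambda$-dependent contribution is at $x=b$: writing $R(u)=\alpha_{20}u(b)-\alpha_{21}u'(b)$ and $R'(u)=\alpha'_{20}u(b)-\alpha'_{21}u'(b)$, a direct expansion yields $u(b)\bar u'(b)-u'(b)\bar u(b)=\frac{1}{\Delta_0}\big(R(u)\overline{R'(u)}-R'(u)\overline{R(u)}\big)$, and inserting the eigenvalue relation $R(u)=-\lambda_0R'(u)$ together with its conjugate collapses this to $-\frac{1}{\Delta_0}(\lambda_0-\overline{\lambda_0})\,|R'(u)|^2$.

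Collecting the three evaluations and transposing the $b$-term, I expect to reach
$$(\lambda_0-\overline{\lambda_0})\Big[\tfrac{1}{\Delta_{12}}\!\int_a^c|u|^2\,dx+\tfrac{1}{\Delta_{34}}\!\int_c^b|u|^2\,dx+\tfrac{1}{\Delta_0\Delta_{34}}\,|R'(u)|^2\Big]=0.$$
Because $\Delta_0,\Delta_{12},\Delta_{34}>0$, the bracketed quantity is strictly positive for a nontrivial eigenfunction $u$, whence $\lambda_0-\overline{\lambda_0}=0$ and $\lambda_0$ is real. Conceptually this bracket is the squared norm of $u$ in the modified space $L^2[a,c)\oplus L^2(c,b]\oplus\mathbb{C}$ equipped with the stated positive weights, so the positivity hypotheses are exactly what turn that form into an inner product.

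The step I expect to be the main obstacle is the bookkeeping at $c$: one must verify that the transmission conditions genuinely force the weighted Wronskian-type brackets to cancel, and this rests on the Pl\"ucker relation $\Delta_{13}\Delta_{24}-\Delta_{14}\Delta_{23}=\Delta_{12}\Delta_{34}$ among the $2\times2$ minors of $A$ --- the same algebraic fact underlying $w_2=\frac{\Delta_{34}}{\Delta_{12}}w_1$. Once that cancellation is secured and the sign of each evaluated term is tracked carefully, the positivity of the three determinants finishes the argument.
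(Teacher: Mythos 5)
A preliminary remark: the paper offers no proof of this theorem at all (its proof environment in the source is empty), so your proposal cannot be compared with the authors' argument and must stand on its own. Your overall architecture is the right one, and the standard one: a Lagrange identity applied separately on $[a,c)$ and $(c,b]$, with weights chosen so that the brackets at $c$ cancel, the bracket at $a$ killed by the real separated condition $\tau_{1}(u)=0$, and the bracket at $b$ converted, via $R(u)=-\lambda_{0}R'(u)$ and $\Delta_{0}>0$, into the extra nonnegative summand that turns the whole expression into a squared norm. Your evaluations at $a$ and at $b$ are correct as written.

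The gap is exactly at the step you flagged as the main obstacle, the bookkeeping at $c$, and it goes in the wrong direction. Write $(\ref{1.4})$--$(\ref{1.5})$ as $B^{-}(u(c-),u'(c-))^{T}+B^{+}(u(c+),u'(c+))^{T}=0$, where $B^{-}$ and $B^{+}$ are the left and right $2\times 2$ blocks of $A$, so that $\det B^{-}=\Delta_{12}$ and $\det B^{+}=\Delta_{34}$. Then
\[
(u(c+),u'(c+))^{T}=-(B^{+})^{-1}B^{-}\,(u(c-),u'(c-))^{T},
\]
a real linear map whose determinant is $\Delta_{12}/\Delta_{34}$ (here the Pl\"ucker identity $\Delta_{13}\Delta_{24}-\Delta_{14}\Delta_{23}=\Delta_{12}\Delta_{34}$ is not even needed), and therefore
\[
[u\bar u'-u'\bar u]_{c+}=\frac{\Delta_{12}}{\Delta_{34}}\,[u\bar u'-u'\bar u]_{c-},
\]
the \emph{reciprocal} of the ratio you assert. (Quick check: $u(c+)=2u(c-)$, $u'(c+)=3u'(c-)$ gives $\Delta_{12}=6$, $\Delta_{34}=1$, and the bracket at $c+$ is $6$ times the bracket at $c-$.) With your weights $1/\Delta_{12}$ and $1/\Delta_{34}$ the two $c$-brackets therefore cancel only if $\Delta_{12}=\Delta_{34}$. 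You were misled by the paper's relation $w_{2}=\frac{\Delta_{34}}{\Delta_{12}}w_{1}$: that relation is what the paper's construction $(\ref{tamm1})$--$(\ref{tamm2})$ produces, but those initial conditions implement the \emph{inverse} of the transfer matrix forced by $(\ref{1.4})$--$(\ref{1.5})$ (an inconsistency internal to the paper), so the algebraic identity you cite is fine while the direction is inverted. The repair is immediate: weight $[a,c)$ by $\Delta_{12}$ and $(c,b]$ by $\Delta_{34}$ (equivalently, by $1/\Delta_{34}$ and $1/\Delta_{12}$). Then the $c$-terms cancel, the $a$-term vanishes, and your computation at $b$ yields
\[
(\lambda_{0}-\overline{\lambda_{0}})\Big[\Delta_{12}\int_{a}^{c}|u|^{2}dx+\Delta_{34}\int_{c}^{b}|u|^{2}dx+\frac{\Delta_{34}}{\Delta_{0}}|R'(u)|^{2}\Big]=0,
\]
whose bracket is strictly positive because $\Delta_{0},\Delta_{12},\Delta_{34}>0$ and $u\not\equiv 0$; hence $\lambda_{0}$ is real. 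So the theorem and your strategy survive, but the weights must be swapped for the cancellation at $c$ to be genuine.
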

\begin{proof}
\end{proof}
Since the Wronskians of $\varphi _{2\lambda}(x )$ and $\psi
_{2\lambda}(x )$ are independent of $x$, in particular, by putting
$x=a$ we have
\begin{eqnarray}\label{(ko)}
w(\lambda ) &=&\varphi _{1}(a,\lambda )\psi_{1}^{\prime }(a,\lambda
)-\varphi
_{1}^{\prime }(a,\lambda )\psi _{1}(a,\lambda ) \notag \\
&=&\alpha _{11}\psi _{1}^{\prime }(a,\lambda )+\alpha _{10}\psi
_{1}(a,\lambda ).
\end{eqnarray}
 Let $\lambda =s^{2}$, $Ims=t.$ By substituting $(\ref{(c2)})$ and
$(\ref{(c4)})$ in $(\ref{(ko)})$ we obtain easily the following
asymptotic representations\\ \textbf{(i)} If $\alpha _{21} ^{\prime
}\neq 0$ and $\alpha _{11}\neq 0$, then
\begin{equation}
w(\lambda )=\Delta_{24}\alpha _{11}\alpha _{21}^{\prime }%
s^{4}\sin \left[ s\left( b-c\right) \right] \sin\left[ s\left(
a-c\right) \right] +O\left( \left| s\right| ^{3}e^{\left| t\right|
(b-a) }\right)  \label{(4.15)}
\end{equation}
\textbf{(ii)} If $\alpha _{21} ^{\prime }\neq  0$ and $\alpha _{11}=
0$, then
\begin{equation}
w(\lambda )=-\Delta_{24}\alpha _{10}\alpha _{21}^{\prime }%
s^{3}\sin \left[ s\left( b-c\right) \right] \cos\left[ s\left(
a-c\right) \right] +O\left( \left| s\right| ^{2}e^{\left| t\right|
(b-a) }\right)  \label{(4.16)}
\end{equation}
\textbf{(iii)} If $\alpha _{21} ^{\prime }= 0$ and $\alpha _{11}\neq
0$, then
\begin{equation}
w(\lambda )=\Delta_{24}\alpha _{11}\alpha _{20}^{\prime }%
s^{3}\cos \left[ s\left( b-c\right) \right] \sin \left[ s\left(
a-c\right) \right] +O\left( \left| s\right| ^{2}e^{\left| t\right|
(b-a) }\right)  \label{(4.17)}
\end{equation}
\textbf{(iv)} If $\alpha _{21} ^{\prime }=0$ and $\alpha _{11}= 0$,
then
\begin{equation}
w(\lambda )=-\Delta_{24}\alpha _{10}\alpha _{20}^{\prime }%
s^{2}\cos \left[ s\left( b-c\right) \right] \cos\left[ s\left(
a-c\right) \right] +O\left( \left| s\right| e^{\left| t\right| (b-a)
}\right)  \label{(4.18)}
\end{equation}
Now we are ready to derived the needed asymptotic formulas for
eigenvalues and  eigenfunctions.
\begin{theorem}
The boundary-value-transmission problem $(\ref{1.1})$-$(\ref{1.5})$
has an precisely numerable many real eigenvalues, whose behavior may
be expressed by two sequence $\left\{ \lambda _{n,1}\right\} $ and
$\left\{
\lambda _{n,2}\right\} $ with following asymptotic as $n\rightarrow \infty $%
\textbf{(i)} If $\alpha _{21} ^{\prime }\neq 0$ and $\alpha
_{11}\neq 0$, then
\begin{equation}
s_{n,1}=(n-2)\frac{\pi}{b-c} +O\left( \frac{1}{n}\right) ,\text{ }%
s_{n,2}=\frac{n \pi}{a-c} +O\left( \frac{1}{n}\right) ,
\label{(5.1)}
\end{equation}
\textbf{(ii)} If $\alpha _{21} ^{\prime }\neq  0$ and $\alpha _{11}=
0$, then
\begin{equation}
s_{n,1}=(n+\frac{1}{2})\frac{\pi}{b-c} +O\left( \frac{1}{n}\right) ,\text{ }%
s_{n,2}=\frac{\pi}{a-c} (n-1)+O\left( \frac{1}{n}\right) ,
\label{(5.2)}
\end{equation}
\textbf{(iii)} If $\alpha _{21} ^{\prime }= 0$ and $\alpha _{11}\neq
0$, then
\begin{equation}
s_{n,1}=(n-1)\frac{\pi}{b-c} +O\left( \frac{1}{n}\right) ,\text{
}s_{n,2}=\frac{\pi}{a-c} (n+\frac{1}{2})+O\left( \frac{1}{n}\right)
, \label{(5.3)}
\end{equation}
\textbf{(iv)} If $\alpha _{21} ^{\prime }= 0$ and $\alpha _{11}= 0$,
then
\begin{equation}
s_{n,1}=(n-\frac{1}{2})\frac{\pi}{b-c} +O\left( \frac{1}{n}\right) ,\text{ }%
s_{n,2}=\frac{\pi }{a-c}(n+\frac{1}{2})+O\left( \frac{1}{n}\right)
\label{(5.4)}
\end{equation}
where $\lambda _{n,1}=s_{n,1}^{2}$ \label{t4}$, \lambda
_{n,2}=s_{n,2}^{2}$
\end{theorem}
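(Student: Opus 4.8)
The plan is to combine Theorem \ref{t2} with the four asymptotic representations $(\ref{(4.15)})$--$(\ref{(4.18)})$. By Theorem \ref{t2} the eigenvalues are exactly the zeros of the entire function $w(\lambda)$, and by the preceding theorem they are real, so it suffices to locate the real zeros of $w$ and estimate them. In each of the four cases I would write $w(\lambda)=w_{0}(\lambda)+R(\lambda)$, where $w_{0}$ is the explicit leading term (a fixed nonzero constant times a power of $s$ times a product of two trigonometric factors in $(b-c)$ and in $(a-c)$) and $R$ is the remainder, which is one power of $|s|$ smaller and carries the same exponential factor $e^{|t|(b-a)}$. Working throughout in the variable $s$ with $\lambda=s^{2}$, the zeros of $w_{0}$ are the union of two arithmetic progressions, one determined by the factor in $(b-c)$ and one by the factor in $(a-c)$; these are precisely the two families that will produce $\{s_{n,1}\}$ and $\{s_{n,2}\}$.

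Next I would run a Rouch\'e-type argument to transfer the zero structure of $w_{0}$ to $w$. Around each zero $s^{*}$ of $w_{0}$ I place a circle of a fixed small radius $\rho$ (chosen once and for all, small relative to the spacings $\pi/(b-c)$ and $\pi/(a-c)$), and on these circles, as well as on a sequence of expanding circles $|s|=R_{N}$ chosen to stay away from all zeros of $w_{0}$, I estimate $|w_{0}|$ from below and $|R|$ from above using $(\ref{(4.15)})$--$(\ref{(4.18)})$. Since $R$ is of strictly lower order in $|s|$ while $|w_{0}|$ is bounded below by a positive multiple of the corresponding power of $|s|$ away from its zeros, one obtains $|R|<|w_{0}|$ on every such contour for all sufficiently large $|s|$. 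Rouch\'e's theorem then gives that $w$ and $w_{0}$ have the same number of zeros inside each contour, so $w$ has exactly one zero near each $s^{*}$ and no others; this simultaneously shows the spectrum is countably infinite and that it splits into the two announced sequences.

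To obtain the $O(1/n)$ correction I would use successive approximation near each progression point. For the family attached to one of the two trigonometric factors I set $s=\sigma_{k}+\delta$, where $\sigma_{k}$ is the $k$-th zero of that factor (an integer multiple of $\pi/(b-c)$ or $\pi/(a-c)$ for a sine factor, a half-integer multiple for a cosine factor), substitute into $w(s^{2})=0$, and expand the leading factor about its zero, so that it becomes $O(\delta)$ while the remaining factor stays $O(1)$. Dividing by the common power of $s$ and by the $O(1)$ factor, the equation reduces to $\kappa\,\delta+O(1/|s|)+O(\delta^{2})=0$ with $\kappa\neq0$, which yields $\delta=O(1/|s|)=O(1/n)$; doing this for each of the two factors produces the two sequences. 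Matching the integer label $n$ to the particular sine or cosine zero appearing in each of $(\ref{(4.15)})$--$(\ref{(4.18)})$ then produces the precise shifts and constants recorded in $(\ref{(5.1)})$--$(\ref{(5.4)})$.

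The main obstacle I anticipate is the bookkeeping forced by the two generally incommensurable frequencies $\pi/(b-c)$ and $\pi/(a-c)$: the two progressions are disjoint for most indices but occasionally come within $o(1)$ of each other, so $w_{0}$ has near-double zeros there and two zeros of $w$ cluster. Handling these clusters requires the Rouch\'e radius $\rho$ to be chosen uniformly and the contour argument to be run around a small circle enclosing both near-zeros, after which the two are separated and assigned to the correct sequence; care is also needed to ensure the lower-order factor used in the successive-approximation step does not itself vanish at the point where we expand. Apart from this disentangling, the argument is a careful adaptation of the classical Titchmarsh technique already invoked for the fundamental solutions.
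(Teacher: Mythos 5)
You should know at the outset that the paper offers nothing to compare against here: the proof environment following Theorem \ref{t4} is empty, and the only preparation is the derivation of $(\ref{(4.15)})$--$(\ref{(4.18)})$. Your proposal is exactly the completion that this setup calls for (Theorem \ref{t2}, reality of the spectrum, Rouch\'e against the explicit leading term, then linearization to get the $O(1/n)$ correction), and on any set of indices along which the two progressions $\{k\pi/(b-c)\}$ and $\{m\pi/(c-a)\}$ stay uniformly separated your argument is correct and standard. One hypothesis you use silently should be flagged: every leading coefficient in $(\ref{(4.15)})$--$(\ref{(4.18)})$ carries the factor $\Delta_{24}$, which the paper never assumes nonzero; the ``continuous'' transmission conditions $y(c+)=y(c-)$, $y'(c+)=y'(c-)$ satisfy all the stated sign conditions yet have $\Delta_{24}=0$, in which case $w_{0}\equiv 0$ and both the theorem and your decomposition $w=w_{0}+R$ collapse. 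So $\Delta_{24}\neq 0$ must be added as a hypothesis.

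The genuine gap is the cluster case, which you correctly identify as the main obstacle but then dispose of by assertion rather than argument. Saying that one encloses both near-zeros in one contour, ``after which the two are separated and assigned to the correct sequence,'' begs the question: Rouch\'e counts the zeros inside a contour but says nothing about where they sit inside it, and the smallest contour on which $|R|<|w_{0}|$ can be verified inside a cluster has radius of order $n^{-1/2}$, not $n^{-1}$. The problem is unavoidable in the commensurable case: if $c$ is the midpoint of $[a,b]$, then $b-c=c-a$ and \emph{every} zero of $w_{0}$ is a double zero. Take case (iv) there: on the real axis $(\ref{(4.18)})$ reads $w=Cs^{2}\cos^{2}[s(b-c)]+O(|s|)$, and writing $s=(n-\frac{1}{2})\frac{\pi}{b-c}+\delta$ the zero condition forces only $\delta^{2}=O(1/|s|)$, i.e. $\delta=O(n^{-1/2})$; your successive-approximation step degenerates precisely here, because the ``remaining factor'' you divide by is itself $O(\delta)$, so your $\kappa$ vanishes. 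No refinement of contours can repair this, because the needed information is simply absent from $(\ref{(4.15)})$--$(\ref{(4.18)})$: the function $s^{2}\cos^{2}s-s$ is consistent with $(\ref{(4.18)})$, and its real zeros deviate from $(n-\frac12)\pi$ by exactly order $n^{-1/2}$. To prove the claimed $O(1/n)$ at such indices one needs the next term in the expansion of $w$ --- hence sharper asymptotics than Theorems \ref{(4.n)} and \ref{(c1)} provide --- after which $w=0$ becomes a quadratic $Cu^{2}+Du+E=o(1)$ in the scaled variable $u=s\delta$, together with reality of the eigenvalues (itself left unproved in the paper) or a discriminant computation to guarantee that its roots are real. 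A concrete instance shows what is at stake: for $q\equiv0$, $a=-1$, $c=0$, $b=1$, boundary conditions $y(-1)=0$ and $\lambda y(1)=y'(1)$, and transmission matrix $A=\left(\begin{smallmatrix}1&1&-1&0\\0&1&0&-1\end{smallmatrix}\right)$ (so $\Delta_{12}=\Delta_{34}=1$, $\Delta_{24}=-1$), the construction $(\ref{tamm1})$--$(\ref{tamm2})$ gives exactly $w=s^{2}\cos^{2}s-s\sin s\cos s+\cos^{2}s-\sin^{2}s$, whose real zeros near $(n-\frac{1}{2})\pi$ satisfy $s\delta\to\frac{-1\pm\sqrt{5}}{2}$; so the $O(1/n)$ conclusion is true there, but only because of the subleading coefficients, which are invisible at the level of $(\ref{(4.18)})$. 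The same defect recurs for incommensurable ratios $(b-c)/(c-a)$ at the infinitely many indices where the two progressions approach within $O(n^{-1/2})$ of each other, so it cannot be dismissed as exceptional.
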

\begin{proof}
\end{proof}
Using this asymptotic expression of eigenvalues we can easily obtain
the corresponding asymptotic expressions for eigenfunctions of the
problem $(\ref{1.1})$-$(\ref{1.5})$. Denote  the corresponding
eigenfunction of the problem by
$$
\widetilde{\varphi}_{n,i}=\{
\begin{array}{c}
\varphi _{1\lambda _{n,i}}(x)\text{ \ for }x\in \lbrack a,c) \\
\varphi _{2\lambda _{n,i}}(x,)\text{ \ for }x\in (c,b].%
\end{array}
$$
 Recalling that $\varphi_{\lambda _{n,i}}(x)$ is an
eigenfunction
 according to the eigenvalue $\lambda _{n},$ by putting (\ref{(5.1)}) in the (\ref{(4.3)}) for $k=0$ we get
\begin{equation*}
\widetilde{\varphi} _{n,1}(x)=\left\{
\begin{array}{ll}
\begin{array}{l}
\alpha_{11}\cos \left[
  (n-2)\pi\frac{
 (x-a)}{(b-c)}\right]+O\left(\frac{1}{n}%
\right)
\\
\end{array}
\begin{array}{l}
\textrm{ for }x\in \lbrack a,c)
\end{array}
\\
-\alpha _{11}\Delta_{24} \frac{ \pi (n-2)}{\Delta_{12}(b-c)} \sin
\left[
  (n-1)\pi\frac{
 (c-a)}{(b-c)}\right]\cos \left[
  (n-2)\pi\frac{
 (x-c)}{(b-c)}\right]
\notag
\\
+O\left(1%
\right) \textrm{ for }x\in (c,b]
\end{array}%
\right.
\end{equation*}%
if $\alpha _{21} ^{\prime }\neq 0$ and $\alpha _{11}\neq 0$.
Similarly,  by putting (\ref{(5.1)}) in the (\ref{(4.4)}) for $k=0$
yields
\begin{equation*}
\widetilde{\varphi} _{n,2}(x)=\left\{
\begin{array}{l}
\alpha_{11}\cos \left[
  n\pi\frac{
 (x-a)}{(a-c)}\right]+O\left(\frac{1}{n}%
\right) \ \textrm{ for }x\in \lbrack a,c) \\
-\alpha _{11}\Delta_{24} \frac{ n \pi }{\Delta_{12}(a-c)} \sin
\left[
  n\pi\right]\cos \left[
 n\pi\frac{
 (x-c)}{(a-c)}\right]
\notag \\
+O\left( 1\right) \textrm{ for }x\in (c,b] \\
\end{array}%
\right.
\end{equation*}
Similar expressions  are as follows:\\
  If $\alpha _{21} ^{\prime }\neq 0$ and $\alpha _{11}=
0$, then
\begin{equation*}
\widetilde{\varphi} _{n,1}(x)=\left\{
\begin{array}{ll}
\begin{array}{l}
-\alpha_{10}\frac{
 (b-c)}{\pi(n+\frac{1}{2})}\sin \left[\pi(n+\frac{1}{2})\frac{(x-a)}{(b-c)}\right]+O\left(\frac{1}{n^{2}}
\right)
\\
\end{array}
\begin{array}{l}
\text{ for }x\in \lbrack a,c)
\end{array}
\\
-\alpha _{10}\frac{\Delta_{24}}{\Delta_{12}}  \cos
\left[\pi(n+\frac{1}{2})\frac{(c-a)}{(b-c)}\right]\cos \left[\pi
(n+\frac{1}{2}) \frac{(x-c)}{(b-c)}\right]
\notag \\+O\left(\frac{1}{n}%
\right) \ \textrm{ for }x\in (c,b]
\end{array}%
\right.
\end{equation*}%
and
\begin{equation*}
\widetilde{\varphi} _{n,2}(x)=\left\{
\begin{array}{l}
-\alpha_{10}\frac{(a-c)}{\pi(n-1)}\sin \left[
  (n-1)\pi\frac{
 (x-a)}{(a-c)}\right]+O\left(\frac{1}{n^{2}}%
\right) \ \textrm{ for }x\in \lbrack a,c) \\
-\alpha _{10} \frac{ \Delta_{24}}{\Delta_{12}} \cos \left[
  \pi(n-1)\right]\cos \left[
  \pi(n-1)\frac{
 (x-c)}{(a-c)}\right]
\notag \\
+O\left( \frac{1}{n}\right) \ \textrm{ for }x\in (c,b] \\
\end{array}%
\right.
\end{equation*}
}  If $\alpha _{21} ^{\prime }= 0$ and $\alpha _{11}\neq 0$, then \
\begin{equation*}
\widetilde{\varphi} _{n,1}(x)=\left\{
\begin{array}{ll}
\begin{array}{l}
\alpha_{11}\cos \left[
  \pi(n-1)\frac{
 (x-a)}{(b-c)}\right]+O\left(\frac{1}{n}
\right)
\end{array}
\begin{array}{l}
\textrm{ for }x\in \lbrack a,c)
\end{array}
\\
-\alpha _{11}\Delta_{24} \frac{ \pi (n-1)}{\Delta_{12}(b-c)} \sin
\left[
  (n-1)\pi\frac{
 (c-a)}{(b-c)}\right]\cos \left[
  (n-1)\pi\frac{
 (x-c)}{(b-c)}\right]
\notag
\\
+O\left(1%
\right) \textrm{ for }x\in (c,b]%
\end{array}%
\right.
\end{equation*}
and
\begin{equation*}
\widetilde{\varphi} _{n,2}(x)=\left\{
\begin{array}{l}
\alpha_{11}\cos \left[
  \pi(n+\frac{1}{2})\frac{
 (x-a)}{(a-c)}\right]+O\left(\frac{1}{n}%
\right) \textrm{ \ \ for }x\in \lbrack a,c)
\\
{\normalsize {}}-\alpha _{11}\pi \frac{
\Delta_{24}(n+\frac{1}{2})}{\Delta_{12}(a-c)} \sin \left[
  \pi(n+\frac{1}{2})\right]\cos \left[
  (n+\frac{1}{2})\pi\frac{
 (x-c)}{(a-c)}\right]
\notag \\
+O\left( 1\right) \textrm{ for }x\in (c,b] \\
\end{array}%
\right.
\end{equation*}

 If $\alpha _{21} ^{\prime }= 0$ and $\alpha _{11}= 0$,
then
\begin{equation*}
\widetilde{\varphi} _{n,1}(x)=\left\{
\begin{array}{ll}
\begin{array}{l}
-\alpha_{10}\frac{(b-c)}{(n-\frac{1}{2})\pi}\sin
\left[(n-\frac{1}{2})\pi
  \frac{
 (x-a)}{(b-c)}\right]+O\left(\frac{1}{n^{2}}%
\right)
\end{array}
\begin{array}{l}
\textrm{ for }x\in \lbrack a,c)
\end{array}
\\
-\alpha _{10} \frac{ \Delta_{24}}{\Delta_{12}} \cos \left[
  (n-\frac{1}{2})\pi\frac{
 (c-a)}{(b-c)}\right]\cos \left[
  (n-\frac{1}{2})\pi\frac{
 (x-c)}{(b-c)}\right]
\notag
\\
+O\left(\frac{1}{n}%
\right) \ \ \textrm{ for }x\in (c,b]%
\end{array}%
\right.
\end{equation*}%
and
\begin{equation*}
\widetilde{\varphi} _{n,2}(x)=\left\{
\begin{array}{ll}
\begin{array}{l}
-\alpha_{10}\frac{(a-c)}{(n+\frac{1}{2})\pi}\sin
\left[(n+\frac{1}{2})\pi
  \frac{
 (x-a)}{(a-c)}\right]+O\left(\frac{1}{n^{2}}%
\right) ,
\end{array}

\begin{array}{l}
\textrm{ for }x\in \lbrack a,c)
\end{array}
\\
-\alpha _{10} \frac{ \Delta_{24}}{\Delta_{12}} \cos \left[
  (n+\frac{1}{2})\pi\right]\cos \left[
  (n+\frac{1}{2})\pi\frac{
 (x-c)}{(a-c)}\right]
\notag
\\
+O\left(\frac{1}{n}%
\right)  \textrm{ for }x\in (c,b]%
\end{array}%
\right.
\end{equation*}%
All this asymptotic approximations are hold uniformly for $x.$%

%-----------------------------------------------------------------
\end{document}